\tikzstyle{vertex}=[circle, draw, inner sep=0pt, minimum size=6pt]
\newtheorem{prelem}{{\bf Theorem}}
\newtheorem{theorem}{Theorem}
\newtheorem{corollary}[theorem]{Corollary}
\newtheorem{lemma}[theorem]{Lemma}
\newtheorem{observation}[theorem]{Observation}
\newtheorem{proposition}[theorem]{Proposition}
\newtheorem{con}{Conjecture}
\theoremstyle{definition}
\newtheorem{definition}[theorem]{Definition}
\title{Chromatic and Clique number of Generalized Sierpi\'{n}ski Gasket Graph $S[G,t]$}
\date{}
\author
{Fatemeh Attarzadeh\thanks{\tt prs.attarzadeh@gmail.com} \\
{\it \small Department of Mathematics, University of Guilan, Rasht, Iran.}\\
	Ahmad Abbasi \thanks{\tt aabbasi@guilan.ac.ir} \\
	{\it \small Department of Mathematics, University of Guilan, Rasht, Iran.}\\
	{ Ali Behtoei\thanks{\tt Corresponding author, a.behtoei@sci.ikiu.ac.ir}}\\
	{\it \small Department of Mathematics, Faculty of Science,} \\ {\it \small Imam Khomeini International University,} \\
	{ \it \small Qazvin, Iran, PO Box: 34148 - 96818.}  
}
\begin{document}
	\maketitle
\begin{abstract}
In this paper we study the chromatic number and the clique number for the  generalized Sierpi\'{n}ski gasket $S[G,t]$ in which the base graph $G$ is an arbitrary simple graph.
For the clique number, we show that $\omega\big(S[G,t]\big)=\omega(G)$ for each graph $G$ and each positive integer $t$. 	
For the chromatic number, we prove that $\chi (S[G,2])=\chi (G)$ and $\chi (G) \leq\chi (S[G,t])\leq \chi (G)+1$ for each graph $G$ and each $t\geq 3$. Also, for each bipartite graph $G$ we show that $\chi (S[G,t])=\chi (G)$ and among some other results, we conjecture that $\chi (S[G,t])=\chi (G)$  for each graph $G$ and each $t\geq 3$.
\vspace{3mm}\\
{\bf Keywords:} Chromatic number, clique number, Sierpi\'{n}ski gasket. \\
{\bf MSC 2010}: 05C15, 05C69.
\end{abstract}

\section{Introduction}

Sierpi\'nski type graphs appear naturally in diverse areas of mathematics and other scientific fields, see for example  \cite{D} and \cite{R}. 
Sierpi\'nski gasket graphs introduced in 1944 by Scorer, Grundy and Smith  \cite{F}, are one of the most important families of such graphs which are obtained after a finite number of iterations   and have a significant role in diverse areas.
Let $G=(V,E)$ be a simple  graph with  vertex set $V(G)$ and edge set $E(G)$. The set $N_G(u)$ denotes the (open) neighborhood of $u\in V(G)$, which means the set of all adjacent vertices  to $u$ in $G$ and the degree of vertex $u$ is $\deg_G(u)=|N_G(u)|$.
A clique $C$ in $G$ is a subset of vertices of $G$ such that every two distinct vertices in $C$ are adjacent and hence, the induced subgraph of $G$ on it is a complete graph. The clique number of $G$, $\omega(G)$, is the maximum size among all cliques of $G$.
A (proper vertex) $k$-coloring of $G$ is an assignment of $k$ colors to the vertices of $G$ in such a way that no pair of adjacent vertices receive the same color, and the chromatic number $\chi(G)$ is the minimum integer $k$ for which a $k$-coloring for $G$ exists.
Motivated by topological studies of the Lipscomb’s space, Klav$\check{z}$ar et al.  introduced the Sierpi\'{n}ski graph $S(K_n,t)$ in which the base graph is the complete graph $K_n$, see \cite{B} and \cite{T}. 
More generally, see \cite{P}, the $t$-th generalized Sierpi\'{n}ski of an arbitrary graph $G=(V,E)$, denoted by $S(G,t)$, is the graph with vertex set 
$V^t$ (the set of all words of length $t$ on the alphabet $V$) and two vertices
${\bf u}=u_1u_2 \dots u_t$ and ${\bf v}=v_1v_2 \dots v_t$ are adjacent in it if and only if there exist
$i \in \lbrace{1, \dots , t}\rbrace$ such that \\
$
(i) ~~u_j = v_j ~~if~~j < i,\\
(ii)~~ u_i \neq  v_i ~~and~~ u_iv_i \in E(G),\\
(iii) ~~u_j = v_i ~~and ~~v_j = u_i ~~if~~ j > i.\\
$
In this case, $\bf{uv}$ is considered as a linking edge appeared at step $i$.
For convenient,  we usually let $V=\{1,2,...,n\}$,  see Figure ${\ref{fig:pic7}}$.
\begin{figure}[ht] 
\centering
\includegraphics[scale=.7]{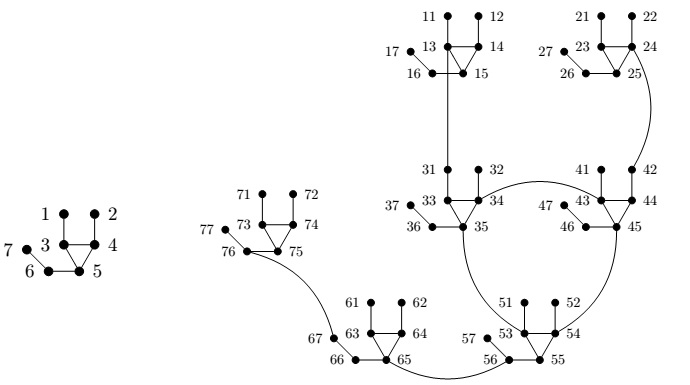}
\caption{\label{fig:pic7}
A graph $G$ and its generalized Sierpi\'{n}ski  $S(G,2)$.
}
\end{figure}

In general, $S(G,t)$ can be constructed recursively from the base graph $G$ with the following process: $S(G,1)=G$ and, for $t\geq 2$, we copy $n$ times $S(G,t-1)$ and add the letter $i$ at the beginning of each label of the vertices belonging to the copy of $S(G,t-1)$ corresponding to vertex $i$ (let $S_i(G,t)$ be the subgraph of $S(G,t)$ induced by these vertices which is also isomorphic to $S(G,t-1)\big)$. Then for each edge $ij\in E(G)$, we add an edge between two vertices $ijj...j$ and $jii...i$ (a linking edge).
Vertices of the form $ii...i$ (where $1\leq i \leq n$) are called extreme vertices.
Note that $S_i(G,t)$ is isomorphic to $S(G,t-1)$ and consequently, $S(G,t)$ contains $n^{t-1}$ copies of the graph $S(G,1)=G$. Also, if $ij\in E(G)$, then the vertex  $ijj...j$ in the copy $S_i(G,t)$ is adjacent to the vertex $jii...i$ in copy $S_j(G,t)$ and this is the unique edge between these two copies.
It is shown in \cite{Y} that the order of generalized Sierpi\'nski graph $S(G,t)$ is $n^t$ and its size is $|E(G)|~\!{n^t-1 \over n-1}~\!$.

Sierpi\'{n}ski graphs $S(K_n,t)$ are almost regular and possess many appealing properties, as for instance several coding and several metric properties \cite{Q}.
The graph $S(K_3,t)$ is isomorphic to the Tower of Hanoi game graph with $t$ disks, see \cite{B} and
polymer networks and WK-recursive networks can be modeled by generalized Sierpi\'{n}ski graphs, see \cite{PolymerNet}. 
In \cite{V} Parisse proved that $\chi(S(K_n,t))=n$.
Rodr\'iguez-Vel\'azquez et al. in \cite{Y}  obtained closed formulae for several graphical parameters  of the generalized Sierpi\'nski graph $S(G,t)$ (including chromatic, vertex cover, clique, independence and domination number) in terms of the parameters of the base graph $G$. In particular, they prove that $\omega(S(G,t))=\omega(G)$ and $\chi(S(G,t))=\chi(G)$. 
Also,  in \cite{FFF} the degree sequence of $S(G,t)$ is completely determined in terms of the degree sequence of $G$.   

As mentioned before, one of the most important families of fractal like graphs is the the family of Sierpi\'{n}ski gasket graphs (Sierpi\'{n}ski fractals) which appears frequently and has many applications in different areas.
The Sierpi\'{n}ski gasket graph $S_t$ is just a step from the Sierpi\'nski graph $S(K_3,t)$
and is constructed from the Sierpi\'{n}ski graph $S(K_3,t)$ by contracting every edge of $S(K_3,t)$ that lies in no triangle $K_3$
(i.e. by contracting all newly added edges, linking edges, during the iterations), see \cite{L}. 
The Sierpi\'{n}ski and the Sierpi\'{n}ski gasket  of $G=K_3$ at  step $t=3$ are shown in Figure  \ref{fig:pic2222}.
\begin{figure}[ht] 
\centering
\includegraphics[scale=.5]{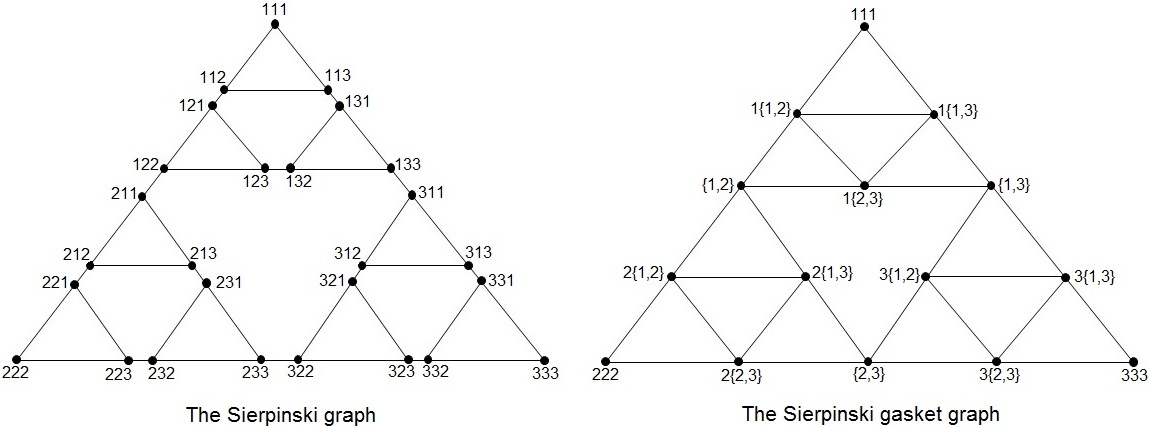}
\caption{\label{fig:pic2222}
The Sierpi\'{n}ski graph and the Sierpi\'{n}ski gasket graph at step $3$.
}
\end{figure}
The vertex-colorings, edge-colorings and total-colorings of the Sierpi\'nski gasket graphs and some other Sierpi\'nski like graphs are studied in \cite{ColorSerpLike}. 
In \cite{L} it is shown that the clique number and the chromatic number of $S_t$ is three, $S_t$ is uniquely $3$-colorable
and that the edge chromatic number (chromatic index) of $S_t$ is $4$ when $t\geq 2$.
In \cite{H} the same construction method is applied for any Sierpi\'{n}ski graph $S(K_n,t)$ by contracting edges that lie in no induced  complete subgraphs $K_n$ (i.e. by contracting all linking edges) and the resulting graph is called a generalized Sierpi\'{n}ski gasket graph which is denoted by $S[K_n,t]$.
In \cite{H} Jakovac studied several properties of graphs $S[K_n,t]$ including hamiltonicity and chromatic number.
He 
show that $\chi(S[K_n,t])=n$ and  that  $S[K_n,t]$ is  Hamiltonian. 
\section{Generalized Sierpi\'{n}ski Gasket ${\bf S[G,t]}$}

Since generalized Sierpi\'nski gasket graphs $S[K_n,t]$ are important and are naturally derived from Sierpi\'nski graphs $S(K_n,t)$ by contracting the linking edges, we  can apply the same for each (base) graph $G$ to construct the (more) generalized Sierpi\'{n}ski gasket graph $S[G,t]$, see also \cite{TOC}. 

\begin{definition}	\label{def1} 
Let $t$ be a positive integer and $G$ be a simple graph of order $n\geq 2$ with the vertex set $V(G)=\{1,2, \dots n\}$. 
The (more) generalized Sierpi\'{n}ski gasket graph $S[G,t]$ is obtained by contracting all of linking edges in the generalized Sierpi\'{n}ski $S(G,t)$ during the iteration processes.
\end{definition}

Similar to the structures of Sierpi\'{n}ski gaskets and generalized Sierpi\'{n}ski graph $S(G,t)$, the generalized Sierpi\'{n}ski gasket $S[G,t]$ can be constructed by using all the vertices of $G$  replaced by the copies of $S[G,t-1]$ and then contracting all linking edges between these copies.
Let $S_i[G,t]$ be the copy of $S[G,t-1]$ in $S[G,t]$ corresponding to the vertex $i \in V=\{1,2, \dots n\}$. 
Note that if $ij\in E(G)$, then the linking edge between two vertices $ijj \dots j$ and $jii \dots i$ in $S(G,t)$ is contracted in $S[G,t]$.
This (newly contracted) vertex will be denoted by $\{i,j\}_{t}$ in $S[G,t]$ $\big{(}$or, for convenient by $\{i,j\}$ when $t=2\big{)}$  and we say that the expanded forms of $\{i,j\}_{t}$ are $ijj \dots j$ and $jii \dots i~\!$.
Some times and for convenient, we identify a contracted vertex with its expanded form representations.
In fact $\{i,j\}_{t}$ is the unique common vertex of two copies $S_i[G,t]$ and $S_j[G,t]$,  see Figure \ref{fig:pic1616}.
Since $|V(S[G,t])|=n|V(S[G,t-1])|-|E(G)|$ and $|E(S[G,t])|=n|E(S[G,t-1])|$, the order of $S[G,t]$ is $$n^t-(n^{t-2}+\cdots +n+1)|E(G)|=n^t-|E(G)|~\!{n^{t-1}-1 \over n-1}$$ and its size is $|E(G)|~\! n^{t-1}$.
\begin{figure}[ht]
	\centering
	\includegraphics[scale=.8]{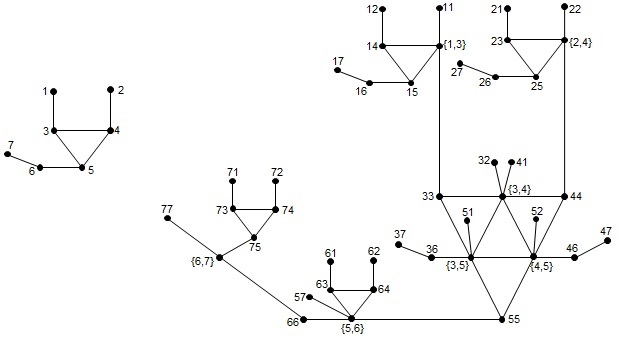}
	\caption{\label{fig:pic1616}
		A graph $G$ and its  generalized Sierpi\'{n}ski gasket $S[G,2]$.
	}
\end{figure}

Also, when $ij\in E(G)$, two vertices 	${\bf u}= u_1u_2 \dots u_r ijj \dots j$ and $ {\bf v}= u_1u_2 \dots u_r jii \dots i$ (in which $0 \leq r \leq t - 2$ and $u_l\in V$ for each $1\leq l\leq r$) are adjacent
in $S(G,t)$ and the linking edge  $\bf{uv}$ $\big{(}$which is actually produced at step $t-r$ in the iteration process of $S(G,t) \big{)}$ is contracted in $S[G,t]$ and this (contracted) vertex  may be denoted by $u_1u_2 \dots u_r \lbrace{i,j}\rbrace_{t-r}$ 
$\big{(}$or, for convenient by $u_1u_2 \dots u_r\{i,j\}$ when $t-r=2\big{)}$
whose expanded forms are $u_1u_2 \dots u_r ijj \dots j$ and $u_1u_2 \dots u_r jii \dots i~\!$.
 Klav$\check{z}$ar et al. in \cite{AAA} proved that when $G$ is a graph with $cc(G)$ connected components, then the generalized Sierpi\'{n}ski graph $S(G,t)$ has  	${1 \over n-1}\big{(}n^{t}(cc(G)-1)+n-cc(G)\big{)}$ 	connected components.
Since edge contraction do not change the number of connected components, the following result directly follows, and by considering the iteration process of producing $S[G,t]$, Observation \ref{ob2} will be obviously obtained.
\begin{proposition} \label{ob1}
	If $G$ is a graph with $cc(G)$  connected components, then the generalized Sierpi\'{n}ski gasket  $S[G,t]$ have 
	${1 \over n-1} 	\big{(}n^{t}(cc(G)-1)+n-cc(G)\big{)}$
	connected components.
\end{proposition}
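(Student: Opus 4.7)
The plan is essentially to read off the result from the theorem of Klav\v{z}ar et al.\ cited just before the proposition, together with the invariance of connected component count under edge contraction. By Definition~\ref{def1}, the graph $S[G,t]$ is obtained from $S(G,t)$ by contracting all of the linking edges that appear during the iteration process. I would begin by recalling the Klav\v{z}ar--\v{Z}emlji\v{c} formula, which states that $S(G,t)$ has exactly ${1 \over n-1}\big(n^{t}(cc(G)-1)+n-cc(G)\big)$ connected components.

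Next, I would invoke the elementary topological fact that contracting a single edge $e=xy$ of a graph $H$ never changes the number of connected components: the contraction identifies the two endpoints of $e$, which already lie in the same component of $H$ (since they are joined by $e$), so the partition of the vertex set into components is unaffected apart from the identification $x\sim y$ inside one fixed component. Iterating this observation over all linking edges (in any order, and regardless of whether some linking edges might themselves share endpoints with previously contracted ones) shows that $S[G,t]$ and $S(G,t)$ have the same number of connected components.

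Combining the two ingredients gives the required formula immediately. There is no real obstacle: the only thing to verify with any care is that the class of ``linking edges'' being contracted is well-defined and finite, which is clear from the recursive construction of $S(G,t)$, and that no linking edge is a loop (it joins distinct vertices $ijj\cdots j$ and $jii\cdots i$ for an edge $ij\in E(G)$), so contraction really is the standard graph-theoretic operation and the component count is preserved.
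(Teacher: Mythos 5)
Your proposal is correct and follows exactly the paper's own argument: cite the Klav\v{z}ar--Zemlji\v{c} count of connected components of $S(G,t)$ and observe that contracting the (non-loop) linking edges preserves the number of connected components. The only difference is that you spell out the contraction invariance in slightly more detail, which the paper leaves as a one-line remark.
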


\begin{observation}\label{ob2}
	If $G$ is an acyclic graph, then for each $t\geq 1$, $S[G,t]$ is acyclic. Also If $G$ contains $l$ cycles of order $k$, then $S[G,t]$ contains at least $~l ~\! n ^{t-1}$ cycles of order $k$.
\end{observation}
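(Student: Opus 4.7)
The observation has two essentially independent parts, which I would treat separately.

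For the acyclicity claim, my plan is to invoke the standard forest characterization that a graph $H$ is acyclic if and only if $|V(H)|-|E(H)| = \mathrm{cc}(H)$, combined with the explicit order/size formulas for $S[G,t]$ stated in the excerpt together with Proposition \ref{ob1}. Let $c = \mathrm{cc}(G)$; since $G$ is acyclic, $|E(G)| = n-c$. Substituting this into
\[
|V(S[G,t])| = n^t - |E(G)|\,\tfrac{n^{t-1}-1}{n-1}, \qquad |E(S[G,t])|=|E(G)|\,n^{t-1},
\]
and simplifying, the difference $|V(S[G,t])|-|E(S[G,t])|$ collapses to $\tfrac{1}{n-1}\bigl(n^t(c-1)+n-c\bigr)$, which by Proposition \ref{ob1} is exactly $\mathrm{cc}(S[G,t])$. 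Hence $S[G,t]$ is a forest, so it is acyclic. This is a short algebraic manipulation and I expect it to be the entirety of the technical work in the first part.

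For the cycle-count claim, my plan is to exhibit $n^{t-1}$ edge-disjoint subgraph copies of $G$ inside $S[G,t]$ and then transplant each cycle of $G$ into every copy. Unwinding the recursive construction iteratively, $S[G,t]$ is built from $n$ glued copies of $S[G,t-1]$, then $n^2$ copies of $S[G,t-2]$, and so on, ending with $n^{t-1}$ copies of $S[G,1]=G$ at the innermost stage; these innermost copies are indexed by prefixes $\mathbf{w}\in V^{t-1}$, and any two of them share (at most) contracted vertices but no edges, since every edge of $S[G,t]$ is produced at exactly one stage of the recursion and lies inside exactly one innermost $G$-copy. Consequently, each of the $l$ cycles of length $k$ in $G$ yields a cycle of length $k$ in each of these $n^{t-1}$ copies, producing at least $l\,n^{t-1}$ pairwise distinct cycles of order $k$ in $S[G,t]$.

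The main (and mild) obstacle is the verification that the $n^{t-1}$ innermost $G$-copies are pairwise edge-disjoint, which is what rules out double counting; this is immediate from the fact that the gluing at each recursion step identifies only vertices (never edges), so the recursion deposits every edge into precisely one innermost copy. Apart from this structural point, both parts are short and essentially consequences of results already established in the excerpt.
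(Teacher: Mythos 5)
Your argument is correct, but it is worth noting that the paper itself offers no written proof of this observation: it simply declares that the statement ``will be obviously obtained'' from the iteration process, i.e.\ from the fact that $S[G,t]$ is built by gluing $n$ copies of $S[G,t-1]$ along single vertices (which preserves acyclicity) and that the innermost level of the recursion consists of $n^{t-1}$ copies of $G$. Your second part is essentially a careful write-up of exactly that intended structural argument; the only point deserving explicit mention is that no two vertices of a single innermost copy are ever identified by a contraction (each vertex lies on at most one linking edge, and the two endpoints of a linking edge always have distinct length-$(t-1)$ prefixes), so each innermost copy embeds injectively and its cycles remain genuine cycles of the same length; edge-disjointness then also follows by the simple count $|E(S[G,t])|=|E(G)|\,n^{t-1}$. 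Your first part, however, takes a genuinely different route: instead of inducting on the gluing structure, you combine the order and size formulas with Proposition \ref{ob1} and the characterization of forests by $|V(H)|-|E(H)|=cc(H)$, and the algebra does check out, since with $|E(G)|=n-c$ one gets $|V(S[G,t])|-|E(S[G,t])|=n^t-(n-c)\frac{n^t-1}{n-1}=\frac{1}{n-1}\bigl(n^t(c-1)+n-c\bigr)=cc(S[G,t])$. This counting proof is arguably cleaner and less hand-wavy than the paper's appeal to the construction, at the cost of leaning on the stated order/size formulas (which themselves presuppose that the contracted linking edges form a matching); the structural induction, by contrast, would also immediately show that $S[T,t]$ is a tree when $T$ is, which is how the paper later uses the observation in Proposition \ref{TreeChrom}.
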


It is well known that the problem of finding a maximum clique is NP-complete, \cite{Garey}. We want to show that the clique number of the generalized Sierpi\'{n}ski gasket graph $S[G,t]$ is equal to the clique number of its base graph $G$. 
For this purpose, we need to determine the neighborhood of each vertex in $S[G,t]$. First, consider the structure of $S[G,2]$ and assumme that $i$ and $j$ are two non-adjacent vertices in $G$. 
Then the (non-contracted) vertex $ij$ in $S[G,2]$  is adjacent just to a vertex in the form $il$ for which $j$ and $l$ are adjacent in $G$. 
Since $i$ and $l$ may be adjacent or non-adjacent in $G$, we deduce that
$$N_{S[G,2]} (ij)=\big{\{} il: ~jl \in E(G),~il \notin E(G)\big{\}} \cup \big{\{} \{i,l\}_2: ~jl \in E(G),~il \in E(G) \big{\}}.$$
Now assumme that $i$ and $j$ are two adjacent vertices in $G$. The expanded forms of the (contracted) vertex $\{i,j\}_2$ of $S[G,2]$ are $ij$ and $ji$. Therefore,
\begin{eqnarray*}
N_{S[G,2]} \big{(}\{i, j\}_2\big{)} = && \!\!\!\!\!\! \big{\{} il: ~jl \in E(G),~il \notin E(G)\big{\}} \cup \big{\{} \{i,l\}_2: ~jl \in E(G),~il \in E(G)\big{\}} \\
&\cup &\!\!\! \big{\{} jl: ~il \in E(G),~jl \notin E(G)\big{\}} \cup \big{\{} \{j,l\}_2: ~il \in E(G),~jl \in E(G)\big{\}}.
\end{eqnarray*}
\begin{lemma} \label{pro 1} \label{AdjacentStep2}
	Two contracted vertices
	$\{i,j\}_2$ and $\{l,k\}_2$
	are adjacent in $S[G,2]$, if and only if
	$|\{i,j\}\cap \{l,k\}|=1$
	and the induced subgraph of $G$ on $3$-set
	$\{i,j,k,l\}$
	 is a triangle.
\end{lemma}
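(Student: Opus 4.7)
The plan is to invoke directly the explicit neighborhood description of a contracted vertex in $S[G,2]$ that was derived immediately before the lemma, and then check the two directions by simple case analysis.

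For the forward direction, suppose $\{i,j\}_2$ and $\{l,k\}_2$ are adjacent in $S[G,2]$. By the formula for $N_{S[G,2]}(\{i,j\}_2)$, the only contracted vertices in this neighborhood have the form $\{i,m\}_2$ with $jm,im\in E(G)$, or $\{j,m\}_2$ with $im,jm\in E(G)$. Hence, up to relabeling, we may assume $\{l,k\}=\{i,m\}$ (or $\{j,m\}$) for some $m\notin\{i,j\}$ that is adjacent in $G$ to both $i$ and $j$. In either case $\{i,j\}\cap\{l,k\}$ consists of the single shared letter, so $|\{i,j\}\cap\{l,k\}|=1$, and the four letters span only three distinct vertices $\{i,j,m\}$ which form a triangle in $G$ (because $ij,im,jm$ are all edges).

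For the converse, suppose $|\{i,j\}\cap\{l,k\}|=1$ and that $G$ induces a triangle on the $3$-set $\{i,j,k,l\}$. Up to renaming we may take $l=i$, so $\{l,k\}_2=\{i,k\}_2$, with $k\notin\{i,j\}$ and $ij,ik,jk\in E(G)$. Then $\{i,k\}_2$ satisfies $jk\in E(G)$ and $ik\in E(G)$, so by the explicit description of $N_{S[G,2]}(\{i,j\}_2)$ it lies in this neighborhood, i.e.\ $\{i,j\}_2$ and $\{i,k\}_2$ are adjacent. The three remaining sub-cases ($l=j$, $k=i$, $k=j$) are symmetric.

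The main ``obstacle,'' if one can call it that, is purely notational: keeping the roles of the four letters $i,j,k,l$ straight while reducing to a single representative case by symmetry, and making sure the condition that $\{i,j,k,l\}$ really is a $3$-set (i.e.\ the two unordered pairs share exactly one element) is consistently used. Once the neighborhood formula preceding the lemma is in hand, no further structural argument about the iteration or about contractions is needed.
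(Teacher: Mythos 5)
Your proof is correct and follows essentially the same route as the paper's: both arguments come down to the observation that a contracted vertex $\{i,j\}_2$ can only be adjacent to contracted vertices lying in the copy $S_i[G,2]$ or $S_j[G,2]$, i.e.\ sharing exactly one letter, with the shared structure forcing the third edge of a triangle. The only difference is presentational — you read everything off the displayed neighborhood formula for $N_{S[G,2]}\big(\{i,j\}_2\big)$, while the paper re-derives the same facts from the expanded forms $ij,ji,ik,ki$ and the adjacency rule in $S(G,2)$.
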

\begin{proof}
	First, suppose that two (newly) contracted vertices
	$\{i,j\}_2$ and $\{l,k\}_2$
	are adjacent in  
	$S[G,2]$. 
	Thus, $i$ and $j$ are two adjacent vertices in $G$, and $l,k$ are adjacent in $G$ as well.
	According to the construction method, two contracted vertices are adjacent just when both are in the same copy. Hence, $|\{i,j\}\cap \{l,k\}|=1$.
	Without loss of generality, assume that $l=i$ i.e. these two adjacent (contracted) vertices are in the $i-th$ copy $S_i[G,2]$. 
Thus $i$ is adjacent to both of vertices $j$ and $k$
	in $G$.
	Two expanded forms of
	$\{i,j\}_2$
	are
	$ij$ and $ji$
	that are connected with an edge in $S(G, 2)$.
	Similarly, two expanded forms of
	$\{i,k\}_2$ are $ik$ and $ki$.
	By considering the structure of $S(G,t)$ and since two vertices $ij$ and $ki$ are not adjacent nor $ji$ with $ki$, we must have $(ij)(ik) \in E(S(G,t))$.
	Thus $jk \in E(G)$ and hence, three vertices $i,j,k$ form a triangle in $G$.
Now the converse is also obvious.
\end{proof}
Next, consider the structure of $S[G,t]$ with $t\geq 3$.
\begin{lemma}\label{Lem1}
Let $t\geq3$ be an integer.
If  two contracted vertices are adjacent in
$S[G,t]$,
then at least one of them is obtained by contracting a linking edge at step $2$.	 
\end{lemma}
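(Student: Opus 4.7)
The idea is to use the rigid shape of the expanded forms of a contracted vertex together with the observation that each vertex of $S(G,t)$ is an endpoint of at most one linking edge; then a short coordinate argument forces the conclusion.

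First, I would record the shape: a contracted vertex of $S[G,t]$ obtained from a step-$k$ linking edge has expanded forms $w\cdot x\,y\cdots y$ and $w\cdot y\,x\cdots x$, where $w$ is a word of length $t-k$, $xy\in E(G)$, and each tail has length $k$ with $k-1$ repetitions of its last letter. In particular, when $k\geq 3$ the last two letters of every such expanded form coincide.

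Second, I would show that each vertex of $S(G,t)$ is an endpoint of at most one linking edge. If a vertex were an endpoint of linking edges at two distinct steps $k_1<k_2$, its tail of length $k_2$ would have the form $x_2\, y_2^{k_2-1}$; restricting to the last $k_1$ positions gives the constant string $y_2^{k_1}$, which is incompatible with the required shape $x_1\, y_1^{k_1-1}$ since $x_1\neq y_1$. Consequently distinct contracted vertices arise from disjoint pairs of expanded forms, and the only linking edge incident to a particular expanded form is precisely the one whose contraction produced the corresponding contracted vertex. Thus any adjacency between two contracted vertices in $S[G,t]$ is necessarily witnessed by a non-linking edge of $S(G,t)$ between expanded forms, that is, an edge with index $i=t$ in the adjacency rule, which changes only the last letter along an edge of $G$.

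Finally, I would suppose for contradiction that two contracted vertices $\{x_1,y_1\}_{k_1}$ and $\{x_2,y_2\}_{k_2}$ with $k_1,k_2\geq 3$ are adjacent in $S[G,t]$. Then two of their expanded forms agree in the first $t-1$ coordinates and differ in the last, with the two differing letters adjacent in $G$. Matching the $(t-1)$-st coordinate forces agreement of the second-to-last letters of the two expanded forms; but by the first step both of these letters equal the corresponding last letters, so the last letters also coincide. This contradicts the requirement that they differ, proving the lemma. I expect the most delicate point to be the at-most-one-linking-edge observation; everything after that is essentially a coordinate comparison.
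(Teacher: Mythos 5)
Your proof is correct and rests on the same structural facts as the paper's own argument: for a vertex contracted at step $k\geq 3$ both expanded forms end in two equal letters, and any adjacency surviving in $S[G,t]$ must come from a non-linking edge of $S(G,t)$, which alters only the last letter. The paper runs this directly, computing that every neighbour of such a vertex is a step-$2$ contracted vertex (a slightly stronger conclusion it reuses as Corollary \ref{NewContIndep}), whereas you argue by contradiction and make explicit the useful observation that each vertex of $S(G,t)$ lies on at most one linking edge; both routes are sound.
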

\begin{proof}
Let 
${\bf u}=u_1 u_2 \dots u_r\{i,j\}_{t-r}$
be a vertex in $S[G,t]$ which is contracted at step $(t-r)\geq 3$, $ij\in E(G)$.
The expanded forms of this vertex are
${\bf z}=u_1u_2 \dots u_r ijj \dots j$
and
${\bf z^{'}}=u_1u_2 \dots u_rjii \dots i$.
By considering the adjacency rule in $S(G,t)$, since $t-r\geq 3$ the vertex  $\bf{z}$ is adjacent just to a vertex in the form $u_1u_2 \dots u_r ijj \dots jl$  with $l \! \in N_G(j)$. 
According to the construction process, this vertex is contracted at step $2$ and becomes in the form
$u_1u_2 \dots u_r ijj \dots j\{j,l\}_2$.  
Similarly,  ${\bf z^{'}}$ in $S(G,t)$
is adjacent just to a vertex in the form
$u_1u_2 \dots u_r jii \dots il^{'}$
with
$l^{'} \!\!\in N_G(i)$.
Hence,  in $S[G,t]$ the contracted vertex ${\bf u}={\bf z^{'}}$
is adjacent just to a vertex in the form
$u_1u_2 \dots u_r jii \dots i\{i,l^{'}\}_2$.
Thus each neighbour of ${\bf u}$ is in the form
$u_1u_2 \dots u_r ijj \dots j\{j,l\}_2$
or $u_1u_2 \dots u_r jii \dots i\{i,l^{'}\}_2$ which  are contracted vertices at step two.
\end{proof}
\begin{corollary}  \label{NewContIndep}
	If $t\geq 3$, then the set $\big\{ \{i,j\}_t:ij\in E(G) \big\}$ consisting of all (newly)  contracted vertices at step $t$, forms an independant set in $S[G,t]$.
\end{corollary}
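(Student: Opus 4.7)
The plan is to derive this almost immediately from Lemma \ref{Lem1}. Take two arbitrary distinct elements of the proposed independent set, say $\{i,j\}_t$ and $\{k,l\}_t$ with $ij,kl \in E(G)$. Both of these are contracted vertices of $S[G,t]$, and in the notation $u_1u_2\dots u_r \{i,j\}_{t-r}$ of Lemma \ref{Lem1} they correspond to the degenerate case $r=0$, so each one is produced by contracting a linking edge at step $t-r = t$.

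Now I would invoke Lemma \ref{Lem1}: if these two contracted vertices were adjacent in $S[G,t]$, then at least one of them would have to come from a linking edge contracted at step $2$. Since $t \geq 3$, neither vertex was contracted at step $2$, giving a contradiction. Hence no two newly contracted vertices are adjacent, which is exactly the statement that $\{\{i,j\}_t : ij\in E(G)\}$ is an independent set.

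There is no real obstacle here; the work was already done in Lemma \ref{Lem1}, and the only thing to verify is the bookkeeping that $\{i,j\}_t$ really is a vertex contracted at step $t$ in the sense of that lemma (immediate from the definition just after Definition \ref{def1}, where the subscript on a contracted vertex records the step of contraction). The whole proof should therefore fit in two or three lines.
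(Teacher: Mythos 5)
Your proposal is correct and matches the paper's intended argument: the corollary is stated as an immediate consequence of Lemma \ref{Lem1} (whose proof shows every neighbour of a vertex contracted at step $\geq 3$ is a vertex contracted at step $2$), and your two-line deduction is exactly how the paper derives it. No gaps.
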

Now by considering  the adjacency rule in $S(G,t)$, the structure of $S[G,t]$ which is obtained by contracting the linking edges of $S(G,t)$ and Lemma \ref{Lem1}, we get the following result.
\begin{proposition} \label{Neighbors}
Assume that $t\geq 3$. For the neighbors of typical vertices ${\bf{z}}=\{i,j\}_t$, ${\bf{y}}=y_1y_2 \dots y_t$ and ${\bf{x}}=x_1x_2 \dots x_r \{i,j\}_{t-r}$ in $S[G,t]$,  where $ij \in E(G)$ and $1 \leqslant r \leqslant t-2$, we have the following statements.\\ 
{\bf i)} $N_{S[G,t]}({\bf{z}})$ is equal to
\[
\big{\{} ijj \dots j\{j,l\}_2:~ lj \in E(G) \big{\}} ~\!\! \cup ~\!\! \big{\{} jii \dots i\{i,l\}_2:~ li \in E(G) \big{\}}.
\]
{\bf ii)} $N_{S[G,t]}({\bf{y}})$ is equal to
\[
\big{\{} y_1y_2  \dots y_{t-1}l: ly_t \in E(G),  ly_{t-1} \notin E(G) \big{\}} ~\!\! \cup ~\!\! \big{\{} y_1y_2  \dots y_{t-2} \{y_{t-1}, l\}_2: ly_t, ~ ly_{t-1} \in E(G) \big{\}}.
\]
{\bf iii)} If $(t-r)\geqslant 3$, then $N_{S[G,t]}({\bf{x}})$ is given by
\[
\big{\{} x_1x_2  \dots x_{r}ijj \dots j\{j,l\}_2 : lj \in E(G) \big{\}} ~\!\! \cup~\!\! 
 \big{\{} x_1x_2  \dots x_{r} jii \dots i\{i,l\}_2 : li \in E(G) \big{\}}.
 \]
{\bf iv)} If $(t-r)=2$, then $N_{S[G,t]}({\bf{x}})$ is equal to
\begin{eqnarray*}
\big{\{} x_1x_2  \dots x_{t-2}il: il \notin E(G), jl \in E(G) \big{\}} ~\!\!\cup~\!\!
\big{\{} x_1x_2  \dots x_{t-2} \{i, l\}_2: il \in E(G), jl \in E(G) \big{\}}~\!\!\cup  \\
\big{\{} x_1x_2  \dots x_{t-2}jl: jl \notin E(G), il \in E(G) \big{\}} ~\!\!\cup~\!\!
\big{\{} x_1x_2  \dots x_{t-2} \{j, l\}_2: jl \in E(G), il \in E(G) \big{\}}
 ~\!\!\cup~\! {\bf \Omega}
 \end{eqnarray*}
 in which
 \begin{eqnarray*}
 	{\bf \Omega}=
 	\begin{cases}
 		\emptyset~~~~~~~~~~~~~~~~~~~~~~\! x_1x_2  \dots x_{t-2} \notin \{ijj \dots j, jii \dots i\} ,\\
 		\{\{i,j\}_t\} ~~~~~~~~~~~~~~\!\! x_1x_2  \dots x_{t-2} \in \{ijj \dots j, jii \dots i\}.\\
 	\end{cases}\\
 \end{eqnarray*}
\end{proposition}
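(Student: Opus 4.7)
The plan is to work inside the uncontracted graph $S(G,t)$: for each of the four types of vertices I write down the one or two length-$t$ expanded forms, enumerate the potential neighbors by applying the step-$k$ adjacency rule for $k=1,\dots,t$, and then pass to $S[G,t]$ by identifying those neighbors that lie in a step-$k$ linking edge with $k<t$. Lemma \ref{Lem1} and Corollary \ref{NewContIndep} will restrict the relevant identifications to adjacency-step $t-1$ only (that is, iteration-step $2$ in the paper's language), so the whole argument reduces to a finite, routine case analysis.

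Parts (i) and (iii) I plan to run in parallel. Starting from the expanded form $ijj\dots j$ of $\mathbf{z}=\{i,j\}_t$, or $x_1\dots x_r ijj\dots j$ of $\mathbf{x}=x_1\dots x_r\{i,j\}_{t-r}$, the tail is constant equal to $j$ from the contracted position onward. For every step $k<t$ different from the contracted step ($k=1$ for $\mathbf{z}$, $k=r+1$ for $\mathbf{x}$), the adjacency requirement $u_{k+1}=\dots=u_t=v_k$ with $v_k\neq u_k$ will either force $v_k=j=u_k$ or $i=j$, both contradictions; the unique surviving intermediate step merely returns the other expanded form of the same contracted vertex. Thus only step-$t$ neighbors remain, each of the form $ijj\dots jl$ (respectively $x_1\dots x_r ijj\dots jl$) with $jl\in E(G)$, and, since their last two coordinates are $j,l$ with $jl\in E(G)$, they are identified at iteration-step $2$ with their step-$(t-1)$ partners to produce $ijj\dots j\{j,l\}_2$ (respectively $x_1\dots x_r ijj\dots j\{j,l\}_2$). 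The mirror analysis on $jii\dots i$ provides the second set in (i) and (iii). Part (ii) follows immediately: because $\mathbf{y}$ is an uncontracted word, no step-$k<t$ linking edge is incident to it, so its only neighbors are the step-$t$ partners $y_1\dots y_{t-1}l$ with $ly_t\in E(G)$, and each such neighbor is identified at step $2$ precisely when $ly_{t-1}\in E(G)$.

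Part (iv), where $r=t-2$, is where I expect the real work to lie. The two expanded forms $x_1\dots x_{t-2}ij$ and $x_1\dots x_{t-2}ji$ of $\mathbf{x}$ have distinct last two coordinates, so every step $k\leq t-2$ fails the tail-constancy requirement and step $t-1$ only reproduces the other expansion of $\mathbf{x}$. The step-$t$ rule then produces the families $x_1\dots x_{t-2}il$ (with $lj\in E(G)$) and $x_1\dots x_{t-2}jl$ (with $li\in E(G)$), and splitting on whether $il\in E(G)$ (respectively $jl\in E(G)$) will separate the uncontracted from the step-$2$-contracted neighbors, giving the four displayed subcases. The subtlety is the extra set ${\bf\Omega}$: a step-$t$ neighbor of an expanded form of $\mathbf{x}$ can itself coincide with an expanded form $ijj\dots j$ or $jii\dots i$ of the globally contracted vertex $\{i,j\}_t$, and a coordinate-by-coordinate check shows that this collision occurs if and only if $x_1\dots x_{t-2}$ equals $ijj\dots j$ or $jii\dots i$, contributing the single extra neighbor $\{i,j\}_t$. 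Lemma \ref{Lem1} guarantees that no further cross-level edge appears, since one endpoint ($\mathbf{x}$) is already a step-$2$ contracted vertex, and handling this ${\bf\Omega}$ bookkeeping is the step I expect to be the main obstacle.
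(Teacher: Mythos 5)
Your overall strategy---enumerate the $S(G,t)$-neighbors of each expanded form via the position-$k$ adjacency rule and then decide which linking edge, if any, each neighbor lies on---is exactly what the paper (which states this proposition without a written proof) intends, and your computations for (i), (ii), (iii), for the main families of (iv), and for the set ${\bf\Omega}$ are correct. There is, however, one genuinely flawed step: the role you assign to Lemma \ref{Lem1}. You write that ``Lemma \ref{Lem1} guarantees that no further cross-level edge appears, since one endpoint ($\mathbf{x}$) is already a step-$2$ contracted vertex.'' Lemma \ref{Lem1} only asserts that of two adjacent contracted vertices at least one is contracted at step $2$; once $\mathbf{x}$ itself is a step-$2$ contracted vertex, that conclusion is automatically satisfied by \emph{every} contracted neighbor of $\mathbf{x}$, so the lemma excludes nothing. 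The same objection applies to your opening claim that Lemma \ref{Lem1} and Corollary \ref{NewContIndep} ``restrict the relevant identifications to adjacency-step $t-1$ only.'' Whether a neighbor $w=w_1\dots w_t$ is an endpoint of a linking edge must be decided directly, by checking for each $k$ whether $w_{k+1}=\cdots=w_t$ and $w_kw_{k+1}\in E(G)$.

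Moreover, the situation your appeal to Lemma \ref{Lem1} was supposed to rule out actually occurs. In part (iv), take $l=i$ in the family $x_1\dots x_{t-2}il$ with $lj\in E(G)$ (legitimate, since $ij\in E(G)$): the neighbor is $x_1\dots x_{t-2}ii$, and whenever $x_{t-2}i\in E(G)$ this word has constant tail $ii$ meeting an edge of $G$ at position $t-2$, so it is an expanded form of the step-$3$ contracted vertex $x_1\dots x_{t-3}\{x_{t-2},i\}_3$---neither an uncontracted vertex nor a step-$2$ contracted one. (This is consistent with part (iii): that step-$3$ vertex lists $x_1\dots x_{t-3}x_{t-2}\{i,j\}_2=\mathbf{x}$ among its own neighbors.) The proposition as stated survives because it names such neighbors by their expanded forms, and the extreme instance $x_1\dots x_{t-2}=jii\dots i$, $l=i$ is precisely the vertex $\{i,j\}_t$ that you isolate in ${\bf\Omega}$; but your assertion that splitting on $il\in E(G)$ ``separates the uncontracted from the step-$2$-contracted neighbors'' is false, and the correct repair is the explicit tail check above, not Lemma \ref{Lem1}. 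With that substitution your case analysis closes.
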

Obviously, when $G$ contains a triabngle then $S[G,t]$ (which has at least one subgraph isomorphic to $G$) contains a triangle. In the following result we show that the converse is also true and hence, $G$ contains a triabngle if and only if $S[G,t]$ does.
\begin{proposition} \label{TriangleFree}
	If  $G$  is a triangle-free graph, then $S[G,t]$ is triangle-free.
\end{proposition}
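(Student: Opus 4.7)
The plan is to prove the statement by induction on $t$, with the base case $t=1$ trivial since $S[G,1]=G$ is triangle-free by hypothesis. For the inductive step, I would assume that $S[G,t-1]$ is triangle-free and analyze an arbitrary triangle in $S[G,t]$.

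The key structural observation I would use is that every edge of $S[G,t]$ lies entirely within exactly one of the $n$ top-level copies $S_1[G,t], \dots, S_n[G,t]$: in the uncontracted graph $S(G,t)$ the only edges between two copies are the linking edges, and these have all been contracted to produce the shared vertices $\{i,j\}_t$. Consequently, each edge $uv$ of $S[G,t]$ has both endpoints in some common copy, and any two distinct copies $S_i[G,t]$, $S_j[G,t]$ intersect in at most the single vertex $\{i,j\}_t$ (which exists precisely when $ij\in E(G)$).

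Given a triangle $\{u,v,w\}$, I would label by $i$, $j$, $k$ the indices of the (not necessarily distinct) copies hosting the edges $uv$, $vw$, $uw$, and split into three cases. If $i=j=k$, then all three vertices lie in $S_i[G,t]\cong S[G,t-1]$, contradicting the inductive hypothesis. If exactly two indices agree, say $i=j\neq k$, then both $u$ and $w$ must belong to $S_i\cap S_k$, forcing $u=w=\{i,k\}_t$, a contradiction. If $i,j,k$ are pairwise distinct, the same intersection argument yields $u=\{i,k\}_t$, $v=\{i,j\}_t$, $w=\{j,k\}_t$ with $ij$, $ik$, $jk\in E(G)$, so $\{i,j,k\}$ would span a triangle in $G$, contradicting triangle-freeness of $G$.

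The main obstacle is making the structural claims at the start of the second paragraph airtight: that every edge of $S[G,t]$ lies inside a single top-level copy, and that two such copies share at most one vertex. Both are direct consequences of the recursive construction described in the excerpt (build $S[G,t]$ by placing $n$ copies of $S[G,t-1]$ at the vertices of $G$ and identifying, for each edge $ij\in E(G)$, the extreme vertex $j\cdots j$ of copy $i$ with the extreme vertex $i\cdots i$ of copy $j$), but they deserve an explicit one-line justification before launching into the case analysis.
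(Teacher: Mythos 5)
Your proof is correct, but it takes a genuinely different route from the paper's. The paper argues directly: it classifies the three vertices of a hypothetical triangle by type (non-contracted, contracted at step $2$, contracted at step $\geq 3$) and rules out each configuration using the explicit neighborhood descriptions of Lemma \ref{AdjacentStep2}, Lemma \ref{Lem1} and Proposition \ref{Neighbors}, with no induction on $t$. You instead induct on $t$ using only the coarse top-level decomposition of $S[G,t]$ into $n$ copies of $S[G,t-1]$ that pairwise share at most one vertex $\{i,j\}_t$; the only case not killed by the inductive hypothesis is a triangle straddling three distinct copies, which forces $\{i,j,k\}$ to span a triangle in $G$. Both structural facts you rely on (every edge lies inside a single copy, since all inter-copy edges of $S(G,t)$ are linking edges and get contracted; two copies meet in at most the one vertex $\{i,j\}_t$) are stated in the paper's construction discussion, so the gap you flag is easily closed. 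Your argument is shorter and avoids the neighborhood bookkeeping entirely; it is essentially the same localization idea the paper uses for cliques of size $\geq 4$ in Proposition \ref{Clique4First}, extended to handle the exceptional size-$3$ configuration $\{i,j\}_t,\{j,k\}_t,\{i,k\}_t$ that the paper's clique argument explicitly excludes. The paper's approach, in exchange, produces the detailed adjacency information (Proposition \ref{Neighbors}) that it reuses elsewhere.
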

\begin{proof}
Let $G$ be a triangle-free graph and assume on the contrary  that $S[G,t]$ contains a triangle with three vertices $A, B,C$.
If  $A=a_1a_2 \dots a_t$, 	$B=b_1b_2 \dots b_t$ and $C=c_1c_2 \dots c_t$  are three non-contracted vertices in $S[G,t]$,  then we must have $a_{t-1}a_t,b_{t-1}b_t,c_{t-1}c_t \notin E(G)$. Since $A, B,C$ are pairwise adjacent, we must have $a_tb_t,b_tc_t,c_ta_t \in E(G)$ which means that three (distinct) vertice $a_t,b_t,c_t$ form a triangle in $G$, a contradiction.
Thus assume that at least one of these three vertices, say $A$, is a contracted vertex (at some step).
If $A=a_1a_2\cdots a_r \{i,j\}_{t-r}$ is a contracted vertex at step $(t-r)\geq 3$, then Lemma \ref{Lem1} implies that its neighbors $B$ and $C$ are contracted vertices at step two and hence may be considered as vertices in some copies of $S[G,2]$. Since $B$ and $C$ are adjacent, Lemma \ref{AdjacentStep2} implies that $G$ has a triangle, a contradiction.
By similar arguments, we can assume that just one vertex in $\{A,B,C\}$ is a contracted vertex at step $2$ and two others are non-contracted vertices. Thus, assume that $A=a_1a_2\cdots a_{t-2}\{i,j\}_2$ in which $ij\in E(G)$.
We may have $t=2$ or $t\geq 3$. By considering the argument before 
Lemma \ref{AdjacentStep2}, the case (iv) in Proposition \ref{Neighbors}, the adjacency of $B$ and $C$, and without loss of generality, we can assume that $B=a_1a_2\cdots a_{t-2}il$ and $a_1a_2\cdots a_{t-2}il'$ in which $lj,l'j,ll'\in E(G)$. Then, $j,l,l'$ form a triangle in $G$, a contradiction. Thus, $S[G,t]$ is a triangle-free graph, as required.
\end{proof}
Proposition \ref{TriangleFree} means that the existance of a clique of size 3 in $S[G,t]$ leads to the existance of a clique with the similar size 3 in $G$. In the following result we show that each clique of size at least four in $S[G,t]$ is in fact completely inside a copy $S_i[G,t]$  for some $i$.

\begin{proposition} \label{pro8} \label{Clique4First}
	Let $C$ be a clique in $S[G,t]$ with $|C|\geqslant 4$. Then for some $1\leq i\leq n$ we have $C \subseteq V(S_i[G,t])$.
\end{proposition}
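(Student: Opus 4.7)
The plan is to exploit one structural observation: every edge of $S[G,t]$ lies inside some top-level copy $S_i[G,t]$, so no edge of $S[G,t]$ crosses between two top-level copies that share no vertex. This holds because every edge of $S(G,t)$ is either a linking edge or a base edge inside some $S_i(G,t)$, and in forming $S[G,t]$ every linking edge is contracted. Consequently, adjacent vertices of $S[G,t]$ always share a top-level copy. Moreover, every vertex of $S[G,t]$ lies in exactly one top-level copy, with the sole exception of the step-$t$ contracted vertices $\{i,j\}_t$ (for $ij\in E(G)$), which lie in exactly two copies, namely $S_i[G,t]$ and $S_j[G,t]$.

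Given this, I would associate to each $u\in C$ the set $\mathrm{Cop}(u)\subseteq\{1,\dots,n\}$ of top-level copies containing $u$, so $|\mathrm{Cop}(u)|\in\{1,2\}$; the pairwise adjacency of $C$ gives $\mathrm{Cop}(u)\cap\mathrm{Cop}(v)\ne\emptyset$ for all $u,v\in C$, and the task is to upgrade this to $\bigcap_{u\in C}\mathrm{Cop}(u)\ne\emptyset$, which immediately produces an $i$ with $C\subseteq V(S_i[G,t])$. If some $u\in C$ has $\mathrm{Cop}(u)=\{i\}$ (i.e.\ $u$ is not of type $\{i,j\}_t$), then every $v\in C$, being adjacent to $u$, must contain $i$ in its copy set, and we are done. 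Otherwise every vertex of $C$ is a step-$t$ contracted vertex; for $t\ge 3$ this contradicts Corollary \ref{NewContIndep} because $|C|\ge 4\ge 2$, so only $t=2$ remains. Here the $|C|\ge 4$ distinct 2-sets $\{i_u,j_u\}$ arising from the vertices of $C$ are pairwise intersecting, and I would finish with the elementary combinatorial fact that any four or more distinct pairwise intersecting 2-element sets must share a common element.

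The main obstacle is exactly this last step for $t=2$, and it explains why the hypothesis is $|C|\ge 4$ rather than $|C|\ge 3$: when $G$ contains a triangle on $a,b,c$, Lemma \ref{AdjacentStep2} makes the three step-$2$ contracted vertices $\{a,b\}_2,\{b,c\}_2,\{a,c\}_2$ into a $3$-clique in $S[G,2]$ whose associated 2-sets $\{a,b\},\{b,c\},\{a,c\}$ pairwise intersect but have empty common intersection, so this clique is not contained in any single top-level copy. Thus the statement is genuinely false for $|C|=3$, and the $|C|\ge 4$ hypothesis is tight and is precisely what rules out this triangle obstruction in the Helly-type step.
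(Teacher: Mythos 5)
Your proof is correct and follows essentially the same route as the paper's: both hinge on the observation that any vertex of $C$ other than a step-$t$ contracted vertex $\{i,j\}_t$ has all of its neighbours inside its own top-level copy, and then treat the remaining case in which every vertex of $C$ has the form $\{i,j\}_t$. Your endgame (independence of the step-$t$ contracted vertices for $t\ge 3$ via Corollary \ref{NewContIndep}, plus the Helly property of four or more pairwise-intersecting $2$-sets for $t=2$) is a clean repackaging of the paper's direct neighbourhood analysis, which shows that a clique split between $S_i[G,t]$ and $S_j[G,t]$ could only be the triangle $\{i,j\}_t,\{i,k\}_t,\{j,k\}_t$ --- the same obstruction you correctly identify as the reason the hypothesis $|C|\ge 4$ is needed.
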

\begin{proof}
Let ${\bf z} \in C$ and hence ${\bf z} \in V(S_i[G,t])$ for some $i\in \{1,2,...,n\}$, $n=|V(G)|$.
If ${\bf z}$ do not be a (newly) contracted vertex at step $t$, then three cases (ii) to (iv) in Proposition \ref{Neighbors} imply that each neighbour of ${\bf z}$ is in $V(S_i[G,t])$ and hence, $C \subseteq V(S_i[G,t])$ which completes the proof.
Thus, assume that each vertex of $C$ is a (newly) contracted vertex at step $t$. Specially, we have ${\bf z}=\{i,j\}_t$ for some $j\in \{1,2,...,n\}$ in which $ij\in E(G)$.
By the case (i) in Proposition \ref{Neighbors}, each neighbour of ${\bf z}$ is in $V(S_i[G,t]) \cup V(S_j[G,t])$ and this implies that $C \subseteq V(S_i[G,t]) \cup V(S_j[G,t])$.
If $C \subseteq V(S_i[G,t])$ or $C \subseteq V(S_j[G,t])$, then the proof is complete. 
Therefore, assume (on the contrary) that there exist two vertices ${\bf u} \in C \cap V(S_i[G,t])$ and ${\bf  v} \in C \cap V(S_j[G,t])$ with ${\bf  uz} \in E(S[G,t])$ and ${\bf vz} \in E(S[G,t])$. Since $C$ is a clique, we have ${\bf uv} \in E(S[G,t])$ and hence, there exists $k\in \{1,2,...,n\}\setminus \{i,j\}$ such that ${\bf u}=\{i,k\}_t$ and ${\bf v}=\{j,k\}_t$ with $\{ik,jk\}\subseteq E(G)$.
Thus, ${\bf u}$ has no neighbour in $V(S_j[G,t])$ except ${\bf z}$ and ${\bf v}$, and ${\bf v}$ has no neighbour in $V(S_i[G,t])$ except ${\bf z}$ and ${\bf u}$. Thus $C=\{{\bf z},{\bf u},{\bf v}\}$ and $|C|\neq 4$, a contradiction which completes the  proof.
\end{proof}
Assume that there exists a clique $C$ with at least $4$ vertices  in $S[G,t]$. Then, by Proposition \ref{Clique4First} there exists $1\leq i_1 \leq n$ such that $C \subseteq V\big(S_{i_1}[G,t]\big)$. Since $S_{i_1}[G,t]$ is isomorphic to $S[G,t-1]$, for convenient we can consider  $S_{i_1}[G,t]$ as $S[G,t-1]$ and hence,  $C$ is in fact a clique with at least $4$ vertices  in $S[G,t-1]$. Similarly, by Proposition \ref{Clique4First} there exists $1\leq i_2 \leq n$ such that $C \subseteq V\big(S_{i_2}[G,t-1]\big)$. By repeatig this process, we can find a sequence of numbers $i_1,i_2,...,i_{t-1}$ such that
$$C \subseteq V\big(S_{i_{t-1}}[G,2]\big)\subseteq \dots \subseteq V\big(S_{i_{2}}[G,t-1]\big) \subseteq V\big(S_{i_{1}}[G,t]\big).$$
Since $S_{i_{t-1}}[G,2]$ can be considered as $G$, we obtain the following result which will be applied in the next theorem for determining  the exact value of the clique number of $S[G,t]$.
\begin{corollary} \label{Clique4Second}
	If $C$ is a clique with at least $4$ vertices in $S[G,t]$, then $C$ is completely inside a copy of $G$. 
\end{corollary}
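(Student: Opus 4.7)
The plan is to prove this by induction on $t$, using Proposition \ref{Clique4First} as the engine at each stage. The base case is $t=2$: here the copy $S_i[G,2]$ of $S[G,1]=G$ inside $S[G,2]$ is literally a copy of $G$, so a single application of Proposition \ref{Clique4First} places $C$ inside one such $S_{i}[G,2]$, which finishes the base case.

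For the inductive step with $t \geq 3$, I would first apply Proposition \ref{Clique4First} to the clique $C$ in $S[G,t]$ to obtain an index $i_1 \in \{1,\dots,n\}$ with $C \subseteq V(S_{i_1}[G,t])$. By the recursive construction of the gasket recalled at the start of Section 2, the subgraph $S_{i_1}[G,t]$ is isomorphic to $S[G,t-1]$, so $C$ becomes a clique of size at least $4$ in $S[G,t-1]$. Applying the inductive hypothesis to $S[G,t-1]$ yields a copy of $G$ inside $S[G,t-1]$ that contains $C$, and unwinding the isomorphism embeds this copy as a copy of $G$ sitting inside $S_{i_1}[G,t] \subseteq S[G,t]$.

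Equivalently, iterating Proposition \ref{Clique4First} produces a descending chain of indices $i_1,i_2,\dots,i_{t-1}$ and subgraphs
\[
C \;\subseteq\; V\!\big(S_{i_{t-1}}[G,2]\big) \;\subseteq\; \cdots \;\subseteq\; V\!\big(S_{i_{2}}[G,t-1]\big) \;\subseteq\; V\!\big(S_{i_{1}}[G,t]\big),
\]
exactly as displayed in the paragraph preceding the corollary, and the innermost term $S_{i_{t-1}}[G,2]$ is a copy of $S[G,1]=G$, yielding the claim.

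There is no real obstacle here: the statement is essentially a formalization of the argument already sketched immediately above it. The only subtlety worth being explicit about is the bottoming-out of the recursion, namely that at level $t=2$ the subgraphs $S_i[G,2]$ coincide with copies of $G$ itself, so the final application of Proposition \ref{Clique4First} actually lands $C$ inside a copy of the base graph rather than inside some nontrivial gasket.
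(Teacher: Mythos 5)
Your proposal is correct and follows essentially the same route as the paper: the authors also iterate Proposition \ref{Clique4First} to produce the descending chain $C \subseteq V\big(S_{i_{t-1}}[G,2]\big)\subseteq \cdots \subseteq V\big(S_{i_{1}}[G,t]\big)$ and then observe that $S_{i_{t-1}}[G,2]$ is a copy of $G$. Your packaging of the iteration as an induction on $t$, with the explicit remark about the recursion bottoming out at $t=2$, is just a slightly more formal presentation of the identical argument.
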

\begin{theorem} \label{CliqueNumb}
	For each graph $G$ and each positive integer $t$, we have 	$\omega\big(S[G,t]\big)=\omega(G)$.
\end{theorem}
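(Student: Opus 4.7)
The plan is to establish both inequalities $\omega(G) \leq \omega\big(S[G,t]\big) \leq \omega(G)$, with almost all of the real work already done by the preparatory results.

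For the lower bound $\omega(G) \leq \omega\big(S[G,t]\big)$, I would induct on $t$. The base case $t=1$ is immediate since $S[G,1]=G$. For the inductive step, any copy $S_i[G,t]$ is isomorphic to $S[G,t-1]$ and, by the construction, is an induced subgraph of $S[G,t]$ (no edge internal to $S_i[G,t]$ is created by the contraction of linking edges between distinct copies). Hence $\omega\big(S[G,t]\big) \geq \omega\big(S[G,t-1]\big) \geq \omega(G)$ by the inductive hypothesis.

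For the upper bound $\omega\big(S[G,t]\big) \leq \omega(G)$, let $C$ be a maximum clique in $S[G,t]$ and split into cases on $|C|$. If $|C|\geq 4$, then Corollary \ref{Clique4Second} places $C$ entirely inside a copy of $G$, so $|C|\leq \omega(G)$. If $|C|=3$, then $S[G,t]$ contains a triangle; by the contrapositive of Proposition \ref{TriangleFree}, $G$ contains a triangle and $\omega(G)\geq 3 = |C|$. If $|C|=2$, then $S[G,t]$ has an edge; but the size formula $|E(S[G,t])|=|E(G)|\,n^{t-1}$ forces $|E(G)|\geq 1$, so $\omega(G)\geq 2 =|C|$. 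Finally, if $|C|\leq 1$, then $\omega(G)\geq 1\geq |C|$ trivially.

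Combining both directions yields $\omega\big(S[G,t]\big)=\omega(G)$. The only step that is not essentially bookkeeping is the case $|C|\geq 4$, and this is exactly what the chain Proposition \ref{Clique4First} $\Rightarrow$ Corollary \ref{Clique4Second} was set up to handle, so there is no remaining obstacle; the triangle case is covered separately because Corollary \ref{Clique4Second} is stated only for cliques of size at least four.
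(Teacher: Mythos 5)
Your proof is correct and takes essentially the same approach as the paper: the lower bound comes from $G$ sitting inside $S[G,t]$ as a subgraph, and the upper bound combines Proposition \ref{TriangleFree} for the triangle case with Corollary \ref{Clique4Second} for cliques of size at least four. The only cosmetic difference is that you case on the size of a maximum clique of $S[G,t]$ while the paper cases on the value of $\omega(G)$; the two case analyses are mirror images and invoke the same lemmas in the same roles.
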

\begin{proof}
Since $G$ is a subgraph of $S[G,t]$, we have 	$\omega(G)\leq \omega(S[G,t])$. 
If 	$\omega(G)=1$, then $G$  does not have any edge and  hence $S[G,t]$ does not have any edge, which implies that 
$\omega (S[G,t])=1=\omega (G)$, as required.
If $\omega (G)=2$, then $G$ is a triangle-free graph and 	Proposition \ref{TriangleFree} implies that $S[G,t]$ is also triangle-free. Hence, $\omega (S[G,t])=2=\omega (G)$, as required.
If   $\omega(G)= 3$, then Corollary \ref{Clique4Second} implies that $\omega (S[G,t]) \leq 3$ and hence, $\omega (S[G,t])=3=\omega (G)$.
Thus assume that $\omega(G)\geq 4$. Again, Corollary \ref{Clique4Second} implies that $\omega (S[G,t]) \leq \omega (G)$. Hence,
$\omega(S[G,t]) = \omega(G)$, which completes the proof.
\end{proof}
The clique number of each graph is a lower bound for its chromatic number. Thus by using Theorem \ref{CliqueNumb} we see that $\omega(G)\leq \omega(S[G,t]) \leq \chi(S[G,t])$. Since $G$ is a subgraph of $S[G,t]$ we can improve this lower bound as $\chi(G)\leq \chi(S[G,t])$.  In 2014  Jakovac \cite{H} proved that $\chi (S[K_n,t])=\chi(K_n)$. Thus, the lower bound $\chi(G)\leq \chi(S[G,t])$ is attainable by completes graphs. In Theorem \ref{thm20} we will show that this lower bound is attainable for bipartite graphs. Also, in the following we will prove that $\chi(S[G,t]) \leq 1+\chi(G)$ which provides an interesting upper bound.
It is well known that the chromatic number of a graph is 2 if and only if it is a bipartite graph (with non-empty edge set). Also, each tree is a bipartite graph. 
In \cite{Y} it is proved that if $T$ is a tree, then the generalized Sierpi\'nski $S(T,t)$ is a tree. Poroposition \ref{ob1} and Observation \ref{ob2} implies that when $T$ is a tree then the generalized Sierpi\'nski gasket $S[T,t]$ is also a tree and the following result directly follows.
\begin{proposition} \label{TreeChrom}
	For each tree $T$ of order $n\geq 2$ and each integer $t\geq1$, 
	$S[T,t]$ is a tree and $\chi (S[T,t])=2$.
\end{proposition}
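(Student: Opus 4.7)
The plan is to extract \textbf{trees are connected and acyclic}, feed each property into one of the two preceding results, and then invoke the well-known characterization of bipartite graphs.

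First I would note that since $T$ is a tree of order $n\geq 2$, it is connected (so $cc(T)=1$) and acyclic. Applying Observation~\ref{ob2} with $G=T$ immediately yields that $S[T,t]$ is acyclic (in fact, the second part of the observation is vacuous because $T$ has no cycles). Next, substituting $cc(T)=1$ into the formula of Proposition~\ref{ob1} gives
\[
\frac{1}{n-1}\bigl(n^{t}(cc(T)-1)+n-cc(T)\bigr)=\frac{n-1}{n-1}=1,
\]
so $S[T,t]$ is connected. A connected acyclic graph is a tree, so $S[T,t]$ is a tree.

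To conclude $\chi(S[T,t])=2$, I only need $S[T,t]$ to contain at least one edge, i.e., to have order at least $2$. For $t=1$ this is immediate since $S[T,1]=T$ has $n\geq 2$ vertices and $|E(T)|=n-1\geq 1$. For $t\geq 2$, the order formula recorded just before Figure~\ref{fig:pic1616} gives
\[
n^{t}-|E(T)|\,\frac{n^{t-1}-1}{n-1}=n^{t}-(n^{t-1}-1)=n^{t}-n^{t-1}+1\geq 3,
\]
and the size is $(n-1)n^{t-1}\geq 1$. Hence $S[T,t]$ is a tree with at least one edge, which is bipartite with nonempty edge set, so $\chi(S[T,t])=2$.

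There is really no hard step here: the two substantive facts (connectivity via Proposition~\ref{ob1} and acyclicity via Observation~\ref{ob2}) have already been proved, so the proof amounts to checking that each hypothesis of those results is satisfied by a tree and then recalling that trees with an edge have chromatic number two. The only place that needs the mildest of care is ensuring $S[T,t]$ is not edgeless, which is handled by the order/size formula.
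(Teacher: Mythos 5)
Your proposal is correct and follows essentially the same route as the paper: the paper also derives connectivity from Proposition~\ref{ob1} (with $cc(T)=1$) and acyclicity from Observation~\ref{ob2}, concluding that $S[T,t]$ is a tree and hence $2$-chromatic. Your added check that $S[T,t]$ has a nonempty edge set is a small but welcome bit of extra care that the paper leaves implicit.
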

\begin{theorem} \label{TH 12}
	For each graph $G$, we have $\chi (S[G,2])=\chi (G)$.
\end{theorem}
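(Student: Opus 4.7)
The lower bound $\chi(G)\leq\chi(S[G,2])$ is immediate: the copy $S_i[G,2]$ is isomorphic to $G$, so $G$ embeds as a subgraph of $S[G,2]$. The real content is the matching upper bound $\chi(S[G,2])\leq\chi(G)$, and my plan is to produce an explicit proper $\chi(G)$-coloring rather than to argue abstractly.

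Let $k=\chi(G)$ and fix a proper coloring $c:V(G)\to\mathbb{Z}_k$. I would define $f:V(S[G,2])\to\mathbb{Z}_k$ by the uniform rule
\[
f(ij)=c(i)+c(j)\pmod{k} \qquad\text{and}\qquad f(\{i,j\}_2)=c(i)+c(j)\pmod{k},
\]
applied to every non-contracted vertex $ij$ (including $i=j$) and every contracted vertex $\{i,j\}_2$. The formula is symmetric in $i$ and $j$, so $f$ is well-defined on contracted vertices: the two expanded forms $ij$ and $ji$ give the same sum.

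Properness of $f$ is then verified by running through the neighborhood description stated just before Lemma~\ref{pro 1}, together with Lemma~\ref{pro 1} itself. The unifying observation is that, in every case, two adjacent vertices in $S[G,2]$ carry labels that agree in one coordinate and differ in the other by an edge of $G$; hence their $f$-values differ by $c(p)-c(q)$ for some edge $pq\in E(G)$, which is nonzero in $\mathbb{Z}_k$. Equivalently, restricted to a single copy $S_i[G,2]\cong G$ the coloring reads $\ell\mapsto c(\ell)+c(i)$, which is just a translate of $c$ in $\mathbb{Z}_k$ and hence proper on $G$; and two such translates automatically agree on each shared vertex $\{i,j\}_2$ by commutativity of addition.

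The main obstacle is really only guessing the right coloring. A naive rule such as $f(ij)=c(j)$ gives a proper coloring of $S(G,2)$ but fails to descend to $S[G,2]$, because the endpoints $ij$ and $ji$ of a contracted linking edge would have the distinct colors $c(j)$ and $c(i)$. The sum-mod-$k$ twist is precisely the symmetric repair that is needed, and once the formula is in place each adjacency case reduces in a single line to the properness of $c$ on $G$.
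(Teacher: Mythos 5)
Your proposal is correct and uses essentially the same argument as the paper: the paper's proof defines exactly the coloring $g(xy)=f(x)+f(y)\pmod{k}$ on both non-contracted and contracted vertices and verifies properness through the same case analysis of adjacencies (two vertices sharing a first coordinate with second coordinates adjacent in $G$). Your additional remarks about well-definedness on contracted vertices and the restriction to each copy being a translate of $c$ match the paper's observation that $g$ acts on $S_x[G,2]$ by permuting the colors of $f$.
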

\begin{proof}
Suppose that 	$\chi (G)=k$ 	and let 	$f: V(G)\rightarrow \{1,2, \dots k\}$ be a proper vertex-coloring  of $G$.
	Since $G$ is a subgraph of $S[G,2]$, we have $k=\chi (G)\leq \chi (S[G,2])$.
Now define the (coloring) function  $g: V(S[G,2])\rightarrow \{1,2, \dots k\}$ as 
$g(xy)=f(x)+f(y)$ 	and $g(\{x^{\prime},y^{\prime}\}_2)=f(x^{\prime})+f(y^{\prime})$, in which the color numbers are considered in module $k$. 	We want to show that $g$ is a proper vertex-coloring of $S[G,2]$ and hence,  $\chi (S[G,2]) \leq k$, which  completes the proof.
	Let ${\bf u}$ and ${\bf v}$ be two adjacent vertices in 	$S[G,2]$.
Thus one of the following three cases will occur.\\
{\bf Case i. } 	${\bf u}= xy$ and ${\bf v}= xy_1$ for some $x, y, y_1 \in V(G)$ with $yy_1\in E(G)$. \\
	In this case, two vertices ${\bf u}$ and ${\bf v}$ are in fact inside the  copy  $S_x[G,2]$ which is isomorphic to $G$. 
Since $yy_1 \in E(G)$ and $f$ is  the proper vertex-coloring of $G$, we have 	$f(y)\neq f(y_1)$ 	and hence,
$$g(xy)= f(x)+f(y)\neq  f(x)+f(y_1)=g(xy_1).$$
In fact, $g$ provides a proper $k$-coloring for $S_x[G,2]$ by permuting the colors assigned by $f$ to $V(G)$. \\
{\bf Case ii.} 	${\bf u}=\{x,y\}_2$ and ${\bf v}=xy_1$ for some $x, y, y_1 \in V(G)$ with $yy_1 \in E(G)$. \\
Since $f(y)\neq f(y_1)$, by the defintion of $g$ we obtain
$$g(\{x,y\}_2) = f(x)+f(y)\neq f(x)+f(y_1)= g(xy_1).$$
{\bf Case iii. } 	${\bf u}=\{x,y\}_2$ and ${\bf v}= \{x,y_1\}_2$ for some $x, y, y_1 \in V(G)$ with $yy_1 \in E(G)$. \\
Similarly, from $f(y)\neq f(y_1)$ we obtain $g(\{x,y\}_2) \neq  g(\{x,y_1\}_2)$. \\
Hence, in each case the colors assigned by $g$ to two adjacent vertices ${\bf u}$ and ${\bf v}$ are different, and the proof is complete.
\end{proof}
\begin{theorem} \label{UpBound}
	Let $G$ be an arbitrary graph and $t\geq 1$ be a positive integer. Then we have
	$\chi (S[G,t])\leq \chi (G)+1$.
\end{theorem}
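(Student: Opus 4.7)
The plan is to handle the small cases directly and then construct an explicit $(\chi(G)+1)$-coloring for $t\geq 3$. For $t=1$, $S[G,1]=G$ yields $\chi(S[G,1])=\chi(G)\leq\chi(G)+1$, and Theorem \ref{TH 12} already gives $\chi(S[G,2])=\chi(G)\leq\chi(G)+1$, so the real work is the case $t\geq 3$.

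Set $k=\chi(G)$ and fix a proper $k$-coloring $f\colon V(G)\to\{1,\dots,k\}$. I would define $c\colon V(S[G,t])\to\{1,\dots,k+1\}$ by three rules: every non-contracted vertex $u=u_1u_2\cdots u_t$ receives $c(u)\equiv f(u_{t-1})+f(u_t)\pmod{k}$, with residues taken in $\{1,\dots,k\}$; every step-$2$ contracted vertex $x_1\cdots x_{t-2}\{i,j\}_2$ receives $f(i)+f(j)\pmod{k}$, which is well defined because both expanded forms $\cdots ij$ and $\cdots ji$ give the same sum of last-two-letter colors; and every contracted vertex that was contracted at some step $m\geq 3$ (including the newly contracted vertices $\{i,j\}_t$) receives the single special color $k+1$.

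To verify that $c$ is proper I would go through the adjacency types supplied by Proposition \ref{Neighbors}, Lemma \ref{AdjacentStep2}, and Lemma \ref{Lem1}. When a non-contracted vertex $y=y_1\cdots y_t$ is adjacent either to another non-contracted vertex $y_1\cdots y_{t-1}\ell$ or to a step-$2$ contracted vertex $y_1\cdots y_{t-2}\{y_{t-1},\ell\}_2$ (the two cases in Proposition \ref{Neighbors}(ii)), the color difference reduces in both subcases to $f(\ell)-f(y_t)\not\equiv 0\pmod{k}$ since $\ell y_t\in E(G)$. Two adjacent step-$2$ contracted vertices share one index $i$ and come from a triangle $ijk$ in $G$ (Lemma \ref{AdjacentStep2}), so their color difference is $f(j)-f(k)\not\equiv 0\pmod{k}$. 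A step-$2$ and a step-$\geq 3$ vertex carry colors in $\{1,\dots,k\}$ and $\{k+1\}$ respectively, hence differ automatically. Finally, Lemma \ref{Lem1} rules out both step-$\geq 3$ to step-$\geq 3$ adjacencies and step-$\geq 3$ to non-contracted adjacencies, so no edge is monochromatic.

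The main obstacle is justifying a single color $k+1$ for all the step-$\geq 3$ contracted vertices at once, since there are many of them of different types (including $\{i,j\}_m$ for every $3\leq m\leq t$). This is precisely the structural content of Lemma \ref{Lem1}: every neighbour of a step-$\geq 3$ contracted vertex is step-$2$ contracted, so these vertices form an independent set whose entire boundary lies in the step-$2$ layer. Once that is accepted, the remaining verification is a routine mod-$k$ computation which is essentially the coloring from Theorem \ref{TH 12} applied to the last two coordinates of each vertex label.
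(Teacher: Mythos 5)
Your proposal is correct and produces exactly the coloring that the paper constructs: colour $k+1$ on all vertices contracted at steps $3,\dots,t$ (an independent set whose whole neighbourhood lies among step-$2$ contracted vertices, by Lemma \ref{Lem1} and Proposition \ref{Neighbors}), and the mod-$k$ coloring of Theorem \ref{TH 12} on the last two coordinates everywhere else. The only difference is presentational: the paper builds this coloring recursively (defining $F_t$ from the colorings $F_{t-1,i}$ of the copies $S_i[G,t]$), whereas you write the same coloring in closed form and verify properness by a direct case analysis of the edge types, so the mathematical content is essentially identical.
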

\begin{proof}
Assume that $|V(G)|=n$ and $\chi (G)=k$. 
	For $t=1$ we have $S[G,1]=G$ and  $\chi (S[G,1])= \chi (G)$.
	For $t=2$, Theorem \ref{TH 12} implies that 
	$\chi (S[G,2])=k$ and hence, there exists a proper $k$ coloring
	$F_2: V(S[G,2])\rightarrow \{1,2, \dots ,k\}$.
Now suppose that $t=3$. 
Since for each $1\leq i\leq n$ 	 the $i-th$ copy $S_i[G,3]$  in $S[G,3]$ is isomorphic to $S[G,2]$,  
there exists	a  proper  $k$-coloring $F_{2,i} : V(S_i[G,3])\rightarrow \{1,2, \dots ,k\}$ of the vertices of $S_i[G,3]$. 
Let $A_3 = \big{\{ }\{i,j\}_3 : ij \in E(G) \big{\}}$ be the set of all (newly) contracted vertices at step $3$ in $S[G,3]$. 
By Corollary \ref{NewContIndep}, $A_3$ is an independent set in $S[G,3]$.
Define the (coloring) function 	$F_3: V(S[G,3])\rightarrow \{1, 2, \dots ,k, k+1\}$ as 
\begin{eqnarray*}
F_3({\bf u})=
		\begin{cases}
		k+1 ~~~~~~~~~~~~~~~\! {\bf u} \in A_3,\\
		F_{2,i}({\bf u}) ~~~~~~~~~~~~~\!
		{\bf u} \in V\big{(}S_i[G,2]\big{)} \backslash A_3,~ 1\leqslant i \leqslant n 		.\\
		\end{cases}\\
	\end{eqnarray*}
Since $A_3$ is an independent set, each $F_{2,i}$ is a proper $k$-coloring for the copy $S_i[G,3]$ and each pair of distinct copies $S_i[G,3]$ and $S_j[G,3]$ share at most one vertex $\{i,j\}_3$ in $A_3$ when $ij\in E(G)$, $F_3$ is a proper $(k+1)$-coloring for $S[G,3]$ in which the color class of all vertices with color $k+1$ is $A_3$.
 Specially, $\chi (S[G,3]) \leqslant k+1$.
 Now let $t\geqslant 4$ and in an inductive way, assume that $\chi (S[G,t-1]) \leqslant k+1$
	and there exists a proper $(k+1)$-coloring $F_{t-1}$ of $S[G,t-1]$ in which only contracted vertices at some step $s \in \{3, 4, \dots , t-1\}$ 
	received the color ``$k+1$".
Note that $S_i[G,t]$ is isomorphic to $S[G,t-1]$ for each  $1\leq i\leq n$. Hence, for each  $1\leq i\leq n$ let $F_{t-1, i}$ be a proper $(k+1)$-coloring of $S_i[G,t]$  in which only contracted vertices at some step $3\leqslant s \leqslant t-1$ 	received the color ``$k+1$". 
Let $A_t$ be the (independent) set of all (newly) contracted vertices at step $t$ in $S[G,t]$
and define the (coloring) function $F_t: V(S[G,t])\rightarrow \{1, 2, \dots, k+1\}$ as 
\begin{eqnarray*}
F_t({\bf u})=
		\begin{cases}
			k+1 ~~~~~~~~~~~~~~~~~\! {\bf u} \in A_t,\\
			F_{t-1,i}({\bf u}) ~~~~~~~~~~~~\!
			{\bf u} \in V(S_i[G,t]) \backslash A_t,~ i \in \{1, 2, \dots, n\}
			.\\
		\end{cases}\\
	\end{eqnarray*}
Similarly,  $F_t$ is a proper coloring for $S[G,t]$ and hence	$\chi (S[G,t])\leq k+1$, which completes the proof.
\end{proof}
Since $G$ is a subgraph of $S[G,t]$, we have $\chi(G)\leq \chi(S[G,t])$ and hence, the following result directly follows from Theorem \ref{UpBound} which indicates that $\chi(S[G,t])$ is equal to eithter $\chi(G)$ or $\chi(G)+1$.
\begin{corollary} \label{UpLowBound}
	For each grapgh $G$ and each positive integer $t$ we have
	$$\chi (G) \leq\chi \big(S[G,t]\big)\leq \chi (G)+1.$$
\end{corollary}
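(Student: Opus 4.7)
The plan is to observe that this corollary is essentially a one-line combination of two facts already established in the paper, so the proof requires no new ideas, just a careful assembly of the pieces. I would structure it as a single display inequality with a sentence of justification on each side.

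First, for the lower bound $\chi(G) \leq \chi(S[G,t])$, I would appeal to the structural fact, already noted in the paragraph preceding Proposition \ref{TreeChrom}, that $G$ embeds as a subgraph of $S[G,t]$ (indeed, any copy $S_{i_1 i_2 \cdots i_{t-1}}[G,2]$ at the deepest level, or more simply any single outer copy at the first stage of the iteration, is isomorphic to $G$). Since the chromatic number is monotone under taking subgraphs, this immediately yields $\chi(G) \leq \chi(S[G,t])$.

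For the upper bound $\chi(S[G,t]) \leq \chi(G)+1$, I would simply cite Theorem \ref{UpBound}, which was just proved by induction on $t$ using the independent set $A_t$ of newly contracted vertices (via Corollary \ref{NewContIndep}) as a dedicated color class for the extra color $k+1$.

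There is no real obstacle here; the only thing to be mindful of is phrasing the subgraph embedding cleanly so the reader sees the lower bound without confusion, and then invoking Theorem \ref{UpBound} by name for the upper bound. The result therefore follows by chaining the two inequalities into the displayed double inequality.
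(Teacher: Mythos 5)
Your proposal is correct and matches the paper's own justification exactly: the paper derives the lower bound from the fact that $G$ is a subgraph of $S[G,t]$ and the upper bound by citing Theorem \ref{UpBound}. Nothing further is needed.
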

In the following theorem, we generalize Proposition \ref{TreeChrom} for bipartite graphs.
\begin{theorem}\label{thm20}
	If $\chi (G)=2$, then for each positive integer $t$ we have $\chi \left(S[G,t]\right)=2$.
\end{theorem}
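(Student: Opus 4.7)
Since $G$ is a subgraph of $S[G,t]$, the lower bound $\chi(S[G,t])\geq \chi(G)=2$ is immediate. For the upper bound I plan to induct on $t$, with $t=1$ trivial ($S[G,1]=G$) and $t=2$ already covered by Theorem \ref{TH 12}. For the inductive step $t-1\to t$ with $t\geq 3$, the strategy is to take a proper $2$-coloring $c_{t-1}$ of $S[G,t-1]$ and assemble a proper $2$-coloring $c_t$ of $S[G,t]$ by pasting a (possibly flipped) copy of $c_{t-1}$ into each of the $n$ copies $S_i[G,t]\cong S[G,t-1]$ that form $S[G,t]$.

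Concretely, for each $i\in V(G)$ let $\sigma(i):=c_{t-1}(ii\cdots i)$ denote the $c_{t-1}$-color of the extreme vertex labeled $i$ in $S[G,t-1]$, and set
\[
c_t(i\mathbf{u}) := c_{t-1}(\mathbf{u}) + \sigma(i) \pmod{2}
\]
for every $i\in V(G)$ and every vertex $i\mathbf{u}$ of the copy $S_i[G,t]$ (where $\mathbf{u}\in V(S[G,t-1])$). This is where the main subtlety of the argument lies: one must check that this prescription is consistent at each shared contracted vertex $\{i,j\}_t$ (with $ij\in E(G)$), whose two expanded forms $ijj\cdots j$ and $jii\cdots i$ live in copies $i$ and $j$ respectively. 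Computing, the two candidate colors work out to $\sigma(j)+\sigma(i)$ and $\sigma(i)+\sigma(j)$, which coincide modulo $2$; so $c_t$ is well defined. The shift $\sigma(i)$ has been engineered precisely to force this agreement, for every edge $ij\in E(G)$ simultaneously.

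Once well-definedness has been established, properness of $c_t$ is essentially automatic. Within each copy $S_i[G,t]$ the coloring $c_t$ is the constant shift $c_{t-1}+\sigma(i)$ of a proper $2$-coloring, and hence remains proper. Moreover, because every inter-copy linking edge of $S(G,t)$ was contracted in forming $S[G,t]$, every edge of $S[G,t]$ lies entirely within a single copy $S_i[G,t]$, so checking properness copy by copy suffices. The main (and essentially only) obstacle is therefore the compatibility check at the shared vertices $\{i,j\}_t$, handled by the displayed calculation; it is also worth noting that a short bookkeeping reveals $\sigma(i)\equiv 0$ for every $i$ as soon as $t\geq 2$, so from $t=3$ onward no flipping is actually needed and the inductive step collapses to placing the same coloring $c_{t-1}$ in each copy.
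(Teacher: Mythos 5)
Your proof is correct, and it follows the same skeleton as the paper's: induct on $t$, decompose $S[G,t]$ into the $n$ overlapping copies $S_i[G,t]\cong S[G,t-1]$, color each copy by a coloring of $S[G,t-1]$, and verify consistency at the shared vertices $\{i,j\}_t$ by tracking the colors of the extreme vertices $ii\cdots i$ and $jj\cdots j$. The difference lies in the bookkeeping, and yours is arguably cleaner and slightly more general. The paper fixes a canonical base coloring (the $g$ of Theorem \ref{TH 12}) and propagates the invariant that every step-$2$ contracted vertex has color $1$ and every extreme vertex has color $2$; it then colors the newly contracted vertices $\{i,j\}_t$ with color $2$ and justifies properness there by appealing to Lemma \ref{Lem1} and Corollary \ref{NewContIndep} (the neighbors of such vertices are step-$2$ contracted vertices of color $1$). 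You instead start from an \emph{arbitrary} proper $2$-coloring $c_{t-1}$, add the parity shift $\sigma(i)=c_{t-1}(ii\cdots i)$ on copy $i$, and get agreement at $\{i,j\}_t$ from the symmetric computation $\sigma(j)+\sigma(i)=\sigma(i)+\sigma(j)$; properness then follows purely from the facts that a constant shift modulo $2$ preserves properness and that every edge of $S[G,t]$ lies inside a single copy (all linking edges having been contracted). This removes any dependence on Lemma \ref{Lem1}, Corollary \ref{NewContIndep}, or the maintained invariant. Your closing remark that $\sigma\equiv 0$ for $t\geq 2$ is correct for the canonical choice of base coloring (where $g(ii)=2f(i)$ is constant on extreme vertices), and it explains why the paper can simply reuse the same coloring in every copy; it is a side observation and not load-bearing, since your shift handles any starting coloring.
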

\begin{proof}
By Corollary \ref{UpLowBound}, it is sufficient to provide a proper $2$-coloring for $S[G,t]$. Assume that  	$f: V(G)\rightarrow \{1,2\}$ is a proper $2$-coloring for $G$. The case $t=1$ is obvious since $S[G,1]=G$.
For $t=2$,  Theorem \ref{TH 12} shows that $\chi \left(S[G,2]\right)=\chi(G)=2$ and its proof provides  the proper $2$-coloring function
$g:V(S[G,2])\rightarrow \{1,2\}$ defined as 
$g(u_1u_2)= f(u_1)+f(u_2)$ and $g(\{u_1,u_2\}_2)= f(u_1)+f(u_2)$ (where the summations are considered in module $2$).
Note that for each contracted vertex $\{u_1, u_2\}_2$ in $V\big(S[G,2]\big)$ we have $u_1u_2\in E(G)$ and hence, $f(u_1)\neq f(u_2)$. 
Thus, $f(u_1)+f(u_2)$ is eithter $1+2$ or $2+1$, which in module $2$ becomes $1$. 
Hence, for each contracted vertex $\{u_1, u_2\}_2$ we have $g\left(\{u_1, u_2\}_2\right)=1$.
Also, for each extreem vertex like $uu$ we have $g(uu)=2$.
Now let $t=3$ and consider the graph $S[G,3]$.
Since for each $1\leq i \leq |V(G)|$, 	$S_i[G,3]$ is isomorphic to $S[G,2]$, we can use the function $g$ to provide a proper $2$-coloring  for  the vertices  of each $S_i[G,3]$  in such a way that all contracted vertices at step two (like $i\{u_1,u_2\}_2$) receive the color $1$ and the  color of each vertex in the form $iuu$ becomes $2$.
In $S[G,3]$, the expanded forms of each (newly) contarcted vertex $\{i,j\}_3$ are $ijj$ and $jii$ which both of them received the color $2$ in colorings of $S_i[G,3]$ and $S_j[G,3]$, i.e. they agree in color, and $\{i,j\}_3$ can be considered as a vertex in $S[G,3]$ whose color is $2$ and its neighbours (which are contracted vertices at step two) have color $1$. This provides 
 a proper $2$-coloring $S[G,3]$ in which all contracted vertices at step two have color $1$.
By an inductive way, assume that $t\geq 4$ and there exists a proper $2$-coloring of $S[G,t-1]$ in which all contracted vertices at step two (like  $u_1u_2...u_{t-3}\{i,j\}_2$) have color $1$.
By using the same coloring on each copy $S_i[G,t]$ and by assigning  color $2$ to each newly contracted vertex at step $t$, Corollary \ref{NewContIndep} implies that a proper $2$-coloring for $S[G,t]$ is obtained and the proof is complete.
\end{proof}
Since the chromatic  number of a graph $G$ is $2$ if and only if $G$ is a bipartite graph with a non-empty edge set, 
the following result directly follows from Theorem \ref{thm20}.
\begin{corollary}
	If $G$ is a bipartite graph, then $S[G,t]$ is bipartite.
\end{corollary}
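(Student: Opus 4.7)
The plan is to obtain this corollary as an essentially one-line consequence of Theorem \ref{thm20}, using the standard characterization that a graph is bipartite if and only if its chromatic number is at most $2$. The only delicate point is that Theorem \ref{thm20} explicitly assumes $\chi(G)=2$, whereas the hypothesis ``$G$ is bipartite'' also admits the edgeless case $\chi(G)=1$; I would handle that boundary case separately before appealing to the theorem.

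The proof would then split on whether $E(G)=\emptyset$. If $G$ has no edges, then the size formula $|E(S[G,t])|=|E(G)|\,n^{t-1}$ established in the section introducing $S[G,t]$ gives $|E(S[G,t])|=0$ directly, so $S[G,t]$ is edgeless and trivially bipartite. Otherwise $G$ is bipartite with at least one edge, which is equivalent to $\chi(G)=2$, and Theorem \ref{thm20} yields $\chi(S[G,t])=2$; by the characterization above, $S[G,t]$ is bipartite.

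Since Theorem \ref{thm20} has already carried out the substantive work of recursively producing a proper $2$-coloring of $S[G,t]$ (color $1$ to every vertex contracted at step two, color $2$ to every extreme-type vertex, and propagating by induction), there is no genuine obstacle to overcome here; the corollary is essentially a clean repackaging. The only step requiring any attention is the edgeless corner case, and even there the size formula closes the argument without any coloring construction.
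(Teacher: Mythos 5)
Your proposal is correct and follows essentially the same route as the paper, which derives the corollary directly from Theorem \ref{thm20} via the characterization of bipartite graphs by chromatic number. Your explicit treatment of the edgeless case is a small added precaution the paper leaves implicit (its phrasing ``bipartite graph with a non-empty edge set'' acknowledges but does not separately dispatch that case), and it does not change the substance of the argument.
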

By considering these results, we provide the following conjecture.

\begin{con}
	For each graph $G$ and each integer $t\geq1$ we have $\chi \left(S[G,t]\right)=\chi (G)$.
\end{con}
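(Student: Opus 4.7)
The plan is to prove $\chi(S[G,t])\le \chi(G)$ by an explicit coloring construction, since the reverse inequality is immediate from $G\subseteq S[G,t]$. I fix a proper $k$-coloring $f:V(G)\to\{0,1,\ldots,k-1\}$ with $k=\chi(G)$ and look for a coloring of $S[G,t]$ of the weighted-sum form
\[
g(u_1u_2\cdots u_t)=\sum_{l=1}^{t}w_l\,f(u_l)\pmod{k},
\]
generalizing the formula $g(xy)=f(x)+f(y)$ used for $t=2$ in Theorem~\ref{TH 12}. The integer weights $w_1,\ldots,w_t$ will be chosen so that $g$ descends to a well-defined and proper vertex coloring of $S[G,t]$.

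For the well-definedness, any contracted vertex $u_1\cdots u_r\{i,j\}_{t-r}$ has the two expanded forms $u_1\cdots u_r\,ij^{t-r-1}$ and $u_1\cdots u_r\,ji^{t-r-1}$, and equating their $g$-values reduces, after cancellation of the common prefix sum, to
\[
\bigl(w_{r+1}-(w_{r+2}+\cdots+w_t)\bigr)\bigl(f(i)-f(j)\bigr)\equiv 0\pmod{k}.
\]
It therefore suffices to impose the balance recurrence $w_{r+1}\equiv w_{r+2}+\cdots+w_t\pmod{k}$ for every $r\in\{0,1,\ldots,t-2\}$. Solving backwards with $w_t=1$ gives $w_{t-1}=1$ and $w_l=2w_{l+1}$ for $l\le t-2$, that is, $w_l=2^{t-1-l}$ for $l<t$. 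These integer weights satisfy the balance for every $k$, so $g$ descends to a well-defined function on $V(S[G,t])$.

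For the properness, the recursive construction of $S(G,t)$ shows that every edge of it is either a linking edge (and is therefore contracted in $S[G,t]$) or a bottom edge $\{u_1\cdots u_{t-1}a,\,u_1\cdots u_{t-1}b\}$ with $ab\in E(G)$ coming from an innermost copy of $G$. On any such bottom edge,
\[
g(u_1\cdots u_{t-1}a)-g(u_1\cdots u_{t-1}b)\equiv w_t\bigl(f(a)-f(b)\bigr)=f(a)-f(b)\not\equiv 0\pmod{k},
\]
because $f$ is proper, and the well-definedness of $g$ makes this computation independent of which expanded form is used at a contracted endpoint. Hence $\chi(S[G,t])\le k=\chi(G)$, settling the conjecture. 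The main obstacle I anticipate is spotting the correct weights: the natural choice $w_l\equiv 1$ succeeds only through $t=2$ and fails at $t=3$, where the expanded forms $ij^{t-1}$ and $ji^{t-1}$ of $\{i,j\}_t$ no longer have matching letter multiplicities, so a nonconstant weighting is forced; once the balance recurrence is in hand, the remaining verification is routine.
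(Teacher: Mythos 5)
This statement is posed in the paper only as a conjecture: the authors prove the case $t=2$ (Theorem~\ref{TH 12}, via the unweighted sum $f(x)+f(y)$), the bipartite case (Theorem~\ref{thm20}), and the general upper bound $\chi(S[G,t])\le\chi(G)+1$ (Theorem~\ref{UpBound}, obtained by spending an extra color on the independent set of newly contracted vertices), but give no argument for general $G$ and $t\ge 3$. Your weighted-sum coloring is therefore not a variant of anything in the paper but a genuinely new argument, and after checking it I believe it is correct and settles the conjecture affirmatively. The two points you leave implicit both hold and should be made explicit when you write this up. First, the linking edges of $S(G,t)$ form a matching (a word can have the shape $u_1\cdots u_{i-1}\,a\,b\cdots b$ with $a\ne b$ and $ab\in E(G)$ for at most one index $i$), so the vertex classes of $S[G,t]$ are exactly the singletons together with the pairs of expanded forms, and agreement of $g$ on each such pair is precisely the right well-definedness condition; your balance identity $w_{r+1}=w_{r+2}+\cdots+w_t$ with $w_t=1$ and $w_l=2^{t-1-l}$ for $l<t$ holds over $\mathbb{Z}$, hence modulo every $k$. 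Second, every edge of $S[G,t]$ is indeed the image of an edge of $S(G,t)$ generated at step $i=t$ (all edges generated at steps $i<t$ are linking edges and are contracted, consistent with the paper's count $|E(S[G,t])|=|E(G)|\,n^{t-1}$), and the two endpoints $u_1\cdots u_{t-1}a$ and $u_1\cdots u_{t-1}b$ of such an edge can never be the two expanded forms of one contracted vertex, since those already differ in some position $\le t-1$; hence no loops arise and $g(P)-g(Q)\equiv f(a)-f(b)\not\equiv 0\pmod{k}$ is a genuine properness statement. I verified the construction explicitly on $S[K_3,3]$ (each innermost triangle becomes rainbow) and it reproduces the known values $\chi(S[K_n,t])=n$. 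Relative to the paper, your argument buys the full conjecture at essentially no extra cost, and your closing remark correctly identifies why the authors' unweighted coloring stalls at $t=2$: for $t\ge3$ the expanded forms $ijj\cdots j$ and $jii\cdots i$ of $\{i,j\}_t$ have unequal letter multiplicities, which is exactly the obstruction the geometric weights remove. This should be promoted from a conjecture to a theorem with your proof.
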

{\bf Acknowledgments:} The authors warmly thank the anonymous referees for reading this manuscript
very carefully and providing numerous valuable corrections and suggestions which improve the quality of this paper.
\\\\
The authors declare that they have no competing interests.

\end{document}